\newcommand{\qed}{\hskip 5mm \rule{2.5mm}{2.5mm}\vskip 10pt}
\newcommand{\R}{{\mathbb R}}
\newcommand{\N}{{\mathbb N}}
\newcommand{\Z}{{\mathbb Z}}
\newcommand*{\bigchi}{\mbox{\large$\chi$}}
\newcommand{\abs}[1]{ \left| #1 \right|}
\newcommand{\func}[2]{#1 \left( #2 \right)}
\newcommand{\parent}[1]{ \left( #1 \right)}
\newcommand{\brackets}[1]{ \left[ #1 \right]}
\newcommand{\setbuilder}[2]{ \left\{ #1 \mid #2 \right\}}
\newcommand{\2}[1]{\mathbb{#1}}
\newcommand{\floor}[1]{\left\lfloor #1 \right\rfloor}
\numberwithin{equation}{section}
\begin{document}
\newtheorem{theorem}{Theorem}[section]
\newtheorem{definition}[theorem]{Definition}
\newtheorem{lemma}[theorem]{Lemma}
\newtheorem{note}[theorem]{Note}
\newtheorem{corollary}[theorem]{Corollary}
\newtheorem{proposition}[theorem]{Proposition}
\renewcommand{\theequation}{\arabic{section}.\arabic{equation}}
\newcommand{\newsection}[1]{\setcounter{equation}{0} \section{#1}}
\title{On the Koopman-von Neumann convergence condition of Ces\`{a}ro means in Riesz spaces
\footnote{{\bf Keywords:} Ces\`{a}ro mean; Koopman-von Neumann; order convergence; Riesz spaces; conditional expectation operators; weak mixing.\
      {\em Mathematics subject classification (2010):} 46A40; 47A35; 37A25; 60F05.}
}
\author{
 Jonathan Homann\\
 Wen-Chi Kuo\\
 Bruce A. Watson\footnote{Supported in part by the Centre for Applicable Analysis and
Number Theory and by National Research Foundation of South Africa grant IFR170214222646 with grant no. 109289.} \\ \\
 School of Mathematics\\
 University of the Witwatersrand\\
 Private Bag 3, P O WITS 2050, South Africa }
\maketitle
\abstract{
\noindent
We extend the Koopman-von Neumann convergence condition on the Ces\`{a}ro mean to the context of a Dedekind complete Riesz space with weak order unit. As a consequence, a characterisation of conditional weak mixing is given in the Riesz space setting. The results are applied to convergence in $L^1$.
}

\parindent=0cm
\parskip=0.5cm
\section{Introduction} \label{s: introduction}

The Koopman-von Neumann Lemma, as referred to by Petersen \cite[Section 2.6]{ergodic book} (see also Krengel \cite[Section 2.3]{krengel}, as well as Eisner, Farkas, Haase and Nagel \cite[Section 9.2]{EFHN}), characterises the convergence of the Ces\`{a}ro mean of a sequence of bounded, non-negative real numbers in terms of the existence of a convergent subsequence of the given sequence. Here, the subsequence is formed from the given sequence by the omission of a, so called, density zero set from ${\2N}_0$, the index set of the given sequence.

In this paper, we consider the order convergence of the Ces\`{a}ro mean of an order bounded, non-negative sequence in a Dedekind complete Riesz space with weak order unit. This requires a more sophisticated density zero concept. In particular, we introduce a density zero sequence of band projections which forms the foundation for the Koopman-von Neumann condition in Riesz spaces. When the Riesz space is the real numbers the characterisation presented here gives the classical Koopman-von Neumann convergence condition. 

As an application of the Koopman-von Neumann Lemma (Theorem \ref{koopman}), we give, in Section \ref{section:weak application}, a characterisation of conditional weak mixing in Riesz spaces. In Section \ref{section:application measurable}, as an example, we apply Theorem \ref{koopman} to characterise the order convergence of the Ces\`{a}ro mean of order bounded, non-negative sequences in $L^1$. 

This work supplements the development of stochastic processes in Riesz spaces of Grobler et al \cite{jensen paper}, Stoica \cite{stoica}, Azouzi et al \cite{azouzi}, Kuo, Labuschagne and Watson \cite{ergodic paper}, and mixing processes in Riesz spaces as considered in Kuo, Rogans and Watson \cite{KRW}, and Kuo, Vardy and Watson \cite{KVW}.

\section{Preliminaries}
 We refer the reader to Aliprantis and Border \cite{riesz book}, Fremlin \cite{fremlin}, Meyer-Nieberg \cite{MN-BL}, and Zaanen \cite{zaanen2} and \cite{zaanen}, for background in Riesz spaces and $f$-algebras.

We recall that, in a Riesz space, $E$, a sequence $\parent{f_n}_{{n \in {\2N}_0}}$ in $E$ converges to zero, in order, if and only if the sequence ${\parent{|f_n|}}_{n \in {\2N}_0}$ converges, in order, to zero in $E$. Further to this, in a Dedekind complete Riesz space, the  absolute order convergence of a sum implies the order convergence of the sum, see below.

\begin{lemma} \label{absolute-conditional convergence}
Let $\parent{f_n}_{n \in \2N_0}$ be a sequence in $E$, a Dedekind complete Riesz space, then order convergence of  
$\displaystyle{\sum_{k=0}^{\infty}|f_k|}$ implies the order convergence of 
$\displaystyle{\sum_{k=0}^{\infty}f_k}$.
\end{lemma}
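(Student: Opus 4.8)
The plan is to adapt the classical argument that absolute convergence implies convergence, replacing the metric/Cauchy machinery with order-convergence tools available in a Dedekind complete Riesz space. Write $S_n = \sum_{k=0}^{n} f_k$ and $T_n = \sum_{k=0}^{n} |f_k|$. By hypothesis $T_n \uparrow T$ in order for some $T \in E$, and in particular the tails $R_n := T - T_n = \sum_{k=n+1}^{\infty} |f_k|$ decrease to $0$ in order, i.e. there is a sequence $u_n \downarrow 0$ with $0 \le R_n \le u_n$ (in a Dedekind complete space the tail sums exist as suprema/infima).

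The key step is to show $(S_n)$ is order Cauchy and then invoke Dedekind completeness. For $m > n$ we have $|S_m - S_n| = \left| \sum_{k=n+1}^{m} f_k \right| \le \sum_{k=n+1}^{m} |f_k| \le R_n$ by the triangle inequality in $E$. Hence $\sup_{m,m' \ge n} |S_m - S_{m'}| \le 2 R_n \le 2 u_n \downarrow 0$, so $(S_n)$ is order Cauchy. In a Dedekind complete Riesz space every order Cauchy sequence is order convergent (this is a standard fact; alternatively one can argue directly: set $g = \inf_n \sup_{m \ge n} S_m$ and $h = \sup_n \inf_{m \ge n} S_m$, note both exist by order boundedness of the tails, show $0 \le g - h \le 2u_n$ for all $n$ so $g = h =: S$, and then $|S_n - S| \le 2 u_n \to 0$). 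Either route yields $\sum_{k=0}^{\infty} f_k = S$ in order.

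I expect the main obstacle to be purely expository rather than mathematical: making sure the order-Cauchy $\Rightarrow$ order-convergent implication is either cited cleanly from the references already listed (Aliprantis--Border, Zaanen, Meyer-Nieberg) or proved in a couple of lines via the $\liminf$/$\limsup$ construction above, and being careful that all the infinite tail sums genuinely exist as order limits (which they do, since $0 \le T_n \le T$ forces the partial sums of $\sum |f_k|$ to be order bounded, and monotone order-bounded sequences converge in a Dedekind complete space). The triangle-inequality estimate $\left| \sum f_k \right| \le \sum |f_k|$ for finite sums is immediate, so no real work is hidden there.
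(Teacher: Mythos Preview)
Your argument is correct, but the paper takes a shorter route. Instead of showing that the partial sums $S_n$ are order Cauchy and then appealing to (or reproving) the fact that order-Cauchy sequences converge in a Dedekind complete Riesz space, the paper simply splits $f_k=f_k^+-f_k^-$ and observes that each of the two increasing sequences $\sum_{k=0}^{n-1}f_k^{\pm}$ is bounded above by the order limit $\ell$ of $\sum_{k=0}^{n-1}|f_k|$; Dedekind completeness then gives order limits $f^{\pm}$ directly, and the desired limit is $f^+-f^-$. The paper's decomposition buys brevity: it reduces everything to the single principle ``monotone and order bounded implies order convergent,'' with no need to construct $\limsup/\liminf$ or to control tail oscillations. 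Your approach, on the other hand, is the faithful transcription of the classical absolute-convergence proof and makes explicit the order-Cauchy structure, which is a useful observation in its own right; just be aware that the positive/negative-part trick gets there in two lines.
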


\begin{proof}
Suppose that $\displaystyle{\sum_{k=0}^{n-1}|f_k|\to \ell}$, in order, as $n\to\infty$. Then $\ell$ is an upper bound for the increasing sequences
$\displaystyle{\left(\sum_{k=0}^{n-1}f_k^\pm\right)}$, which, from the Dedekind completeness of $E$, have order limits, say, $f^\pm$.  Thus,
$\displaystyle{\sum_{k=0}^{n-1}f_k=\sum_{k=0}^{n-1}f_k^+-\sum_{k=0}^{n-1}f_k^-}\to f^+-f^-$, in order, as $n\to\infty$.
\qed
\end{proof}

From the above lemma and \cite[Lemma 2.1]{KRodW2}, we have the following theorem.

\begin{theorem} \label{convergent sequence-absolute cesaro sum}
Let $E$ be a Dedekind complete Riesz space and $\parent{f_n}_{n \in \2N_0}$ a sequence in $E$ with $f_n \rightarrow 0$, in order, as $n \rightarrow \infty$, then
\(\displaystyle
\frac{1}{n} \sum_{k=0}^{n-1} \abs{f_k}
\rightarrow
0,
\)
in order, as $n \rightarrow \infty$.
\end{theorem}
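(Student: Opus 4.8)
The plan is to give the Riesz space version of the classical one-line argument that Ces\`aro means of a null sequence are null, phrased as a ``limsup and squeeze'' for order limits. First I would record two consequences of the hypothesis: being order convergent, $\parent{f_n}_{n\in\N_0}$ is order bounded, so $\abs{f_k}\le w$ for all $k$ and some $0\le w\in E$; and, as recalled in the preliminaries, $f_n\to 0$ in order is the same as $\abs{f_n}\to 0$ in order. By Dedekind completeness the suprema $u_m:=\sup_{k\ge m}\abs{f_k}$ exist for all $m\in\N_0$ and decrease in $m$, and $\abs{f_n}\to 0$ in order is exactly the statement $u_m\downarrow 0$.

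Write $C_n:=\frac1n\sum_{k=0}^{n-1}\abs{f_k}$ and $S_m:=\sum_{k=0}^{m-1}\abs{f_k}$. Splitting the sum at $m$ and using $\abs{f_k}\le u_m$ for $k\ge m$, one gets, for every $n>m$, the estimate $C_n\le \tfrac1n S_m+\tfrac{n-m}{n}u_m\le \tfrac1n S_m+u_m$. Since $0\le C_n\le w$, the sequence $\parent{C_n}$ is order bounded, so $v_N:=\sup_{n\ge N}C_n$ exists, decreases in $N$, and $v:=\inf_N v_N$ exists with $v\ge 0$. Because $0\le C_N\le v_N$, it suffices to prove $v_N\downarrow 0$, i.e.\ $v=0$; the conclusion $C_N\to 0$ in order is then immediate.

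To show $v=0$, fix $m\in\N_0$ and take the supremum over $n\ge N$ (with $N>m$) in the estimate above; since $u_m$ is constant and $n\mapsto\tfrac1n$ is decreasing this gives $v_N\le \tfrac1N S_m+u_m$ for all $N>m$. Now let $N\to\infty$: a Dedekind complete Riesz space is Archimedean, so $\tfrac1N S_m\downarrow 0$, and hence $v\le u_m$. As this holds for every $m$ and $u_m\downarrow 0$, we conclude $v\le\inf_m u_m=0$, so $v=0$.

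The step I expect to need the most care is this last passage to the limit: one has to check that the supremum distributes over the constant summand $u_m$, that $\inf_N\tfrac1N S_m=0$ (the Archimedean property, applied to the fixed positive element $S_m$), and, more foundationally, that the notion of order convergence in force is the one for which $\abs{f_n}\to 0$ is captured by $\sup_{k\ge m}\abs{f_k}\downarrow 0$ --- all standard in a Dedekind complete space, but worth a sentence. I would also note that, as the text indicates, the statement can alternatively be obtained from Lemma~\ref{absolute-conditional convergence} together with \cite[Lemma~2.1]{KRodW2} by dominating $\abs{f_k}$ by a sequence decreasing to $0$ in order and applying that lemma to the dominating sequence; the argument above merely makes the Archimedean ingredient explicit.
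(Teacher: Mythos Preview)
Your proof is correct. It is the standard ``split at $m$ and squeeze'' Ces\`aro argument, carried out carefully with order limits: the key inequality $C_n\le \tfrac{1}{n}S_m+u_m$ for $n>m$ is right, the passage $v_N\le \tfrac{1}{N}S_m+u_m$ follows since $S_m\ge 0$ and $n\mapsto\tfrac{1}{n}$ is decreasing, and the Archimedean property (available in any Dedekind complete Riesz space) then gives $v\le u_m$ for every $m$, hence $v=0$.

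The paper, by contrast, gives no independent argument here: it simply records that the theorem follows from Lemma~\ref{absolute-conditional convergence} together with \cite[Lemma~2.1]{KRodW2}, without spelling anything out. So your route is genuinely different in presentation --- you supply a self-contained proof rather than deferring to an external lemma --- and you already note this alternative in your final paragraph. What your approach buys is that the reader sees exactly where Dedekind completeness (existence of the $u_m$, $v_N$) and the Archimedean property enter, with no need to consult \cite{KRodW2}; what the paper's approach buys is brevity and a link to a result that presumably handles a slightly more general dominated situation. Either is perfectly acceptable.
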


\begin{corollary} \label{convergence implies cesaro convergence}
Let $E$ be a Dedekind complete Riesz space and $\parent{f_n}_{n \in \2N_0}$ a sequence in $E$ with order limit $f$, then,
as an order limit, we have
\(\displaystyle
\frac{1}{n} \sum_{k=0}^{n-1}f_k\to f
\)
in order, as $n\to\infty$.
\end{corollary}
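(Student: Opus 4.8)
The plan is to reduce the statement to Theorem \ref{convergent sequence-absolute cesaro sum} by a shift. Set $g_k := f_k - f$ for each $k \in \2N_0$. Since $f_k \to f$ in order, the sequence $\parent{g_k}$ converges to $0$ in order. Then observe that the Ces\`{a}ro mean splits as
\[
\frac{1}{n} \sum_{k=0}^{n-1} f_k = \frac{1}{n} \sum_{k=0}^{n-1} g_k + \frac{1}{n} \sum_{k=0}^{n-1} f = \frac{1}{n} \sum_{k=0}^{n-1} g_k + f,
\]
using that the Ces\`{a}ro mean of a constant sequence is that constant. So it suffices to show $\frac{1}{n} \sum_{k=0}^{n-1} g_k \to 0$ in order.

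Next I would apply Theorem \ref{convergent sequence-absolute cesaro sum} to $\parent{g_k}$ to obtain $\frac{1}{n} \sum_{k=0}^{n-1} \abs{g_k} \to 0$ in order. By the triangle inequality in the Riesz space $E$,
\[
\abs{\frac{1}{n} \sum_{k=0}^{n-1} g_k} \le \frac{1}{n} \sum_{k=0}^{n-1} \abs{g_k},
\]
and the right-hand side converges to $0$ in order, so the left-hand side does as well (an order limit dominating a nonnegative sequence forces that sequence to $0$). Finally, recalling the remark in the Preliminaries that a sequence $\parent{h_n}$ converges to $0$ in order if and only if $\parent{\abs{h_n}}$ does, we conclude $\frac{1}{n} \sum_{k=0}^{n-1} g_k \to 0$ in order, and hence $\frac{1}{n} \sum_{k=0}^{n-1} f_k \to f$ in order.

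There is no real obstacle here; the only points requiring a moment's care are the identification of the Ces\`{a}ro mean of the constant sequence with $f$ (so that the shift is exact, not merely asymptotic) and the passage, via the triangle inequality, from absolute Ces\`{a}ro convergence to Ces\`{a}ro convergence — both of which are immediate in a Dedekind complete Riesz space and for which Theorem \ref{convergent sequence-absolute cesaro sum} does all the work.
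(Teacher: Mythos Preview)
Your proof is correct and follows essentially the same approach as the paper: both set $g_k=f_k-f$, invoke Theorem~\ref{convergent sequence-absolute cesaro sum} to get $\frac{1}{n}\sum_{k=0}^{n-1}|g_k|\to 0$, and then use the triangle inequality to conclude. The only cosmetic difference is that the paper writes the bound directly as $\bigl|f-\frac{1}{n}\sum_{k=0}^{n-1}f_k\bigr|\le\frac{1}{n}\sum_{k=0}^{n-1}|g_k|$ rather than first splitting off the constant term, and it mentions the Archimedean property when passing to the limit.
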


\begin{proof}
Let $g_n:=f_n-f$ for each $n \in \2N_0$, then $\parent{|g_n|}_{n \in \2N_0}$ is order convergent to $0$, by assumption. Thus, by Theorem \ref{convergent sequence-absolute cesaro sum},
	\[
	0 \leq
	\abs{
	f- \frac{1}{n} \sum_{k=0}^{n-1} f_k
	}
	\leq
	\frac{1}{n} \sum_{k=0}^{n-1} \abs{g_k}
	\rightarrow
	0,
	\]
in order, as $n\to\infty$, and the result follows as $E$ is Archimedean.
\qed
\end{proof}


\section{Koopman-von Neumann Condition}
In \cite{ergodic book}, a subset $N$ of ${\2N}_0$ is said to be of density zero if
	\(\displaystyle
	\frac{1}{n}
	\sum_{k=0}^{n-1}
	\func{{\bigchi}_N}{k}
	\to
	0
	\)
as $n \to \infty$, where $\displaystyle \func{{\bigchi}_N}{k}=0$ if $k \in {\2N_0} \setminus N$ and $\displaystyle \func{{\bigchi}_N}{k}=1$ if $k \in N$. 
The Koopman-von Neumann Lemma \cite[Lemma 6.2]{ergodic book} asserts that if a sequence $(a_n)_{n \in \2N_0}$ of real numbers is non-negative and bounded, then $\displaystyle \frac{1}{n} \sum_{k=0}^{n-1} a_k \to 0$ as $n \to \infty$ if and only if there is $N$, a subset of $\2N_0$, of density zero, such that $a_n \to 0$ as $n \to \infty$, $n \in \2N_0 \setminus N$.

In order to extend the Koopman-von Neumann Lemma to sequences in a Riesz space, we define a density zero sequence of band projections as follows.

\begin{definition}[Density Zero Sequence of Band Projections] \label{subset of density zero definition}
A sequence $\displaystyle {\parent{P_n}}_{n \in {\mathbb{N}}_0}$ of band projections in a Riesz space $E$ is said to be of density zero if
	\(\displaystyle
	\frac{1}{n}
	\sum_{k=0}^{n-1}
	P_k
	\to 0
	\),
	in order as $n\to\infty$.
\end{definition}

With the above definition of density zero sequences of band projections, we can now give an 
analogue of the Koopman-von Neumann Lemma in Riesz spaces.

\begin{theorem}[Koopman-von Neumann] \label{koopman}
Let $E$ be a Dedekind complete Riesz space with weak order unit, say, $e$, and let
 $(f_n)_{n \in {\2N}_0}$ be an order bounded sequence in the positive cone of $E$, $E_+$, then
$\displaystyle{\frac{1}{n} \sum_{k=0}^{n-1} f_k\to 0},$ 
in order, as $n\to\infty$, if and only if there exists a density zero sequence of band projections ${\parent{P_n}}_{n \in \2N_0}$ such that 
$\parent{I-P_n} f_n \to 0$, in order, as $n \to \infty$.
\end{theorem}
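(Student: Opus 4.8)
The plan is to mimic the classical Koopman–von Neumann proof, replacing "density zero subset of $\mathbb{N}_0$" by "density zero sequence of band projections" and replacing Chebyshev/Markov-type arguments by band projection estimates available in a Dedekind complete Riesz space with weak order unit $e$. Throughout I would use that, since the sequence $(f_n)$ is order bounded, there is $u \in E_+$ with $0 \le f_n \le u$ for all $n$; and that for each fixed threshold $1/m$ (that is, for the element $\tfrac{1}{m}e$, or more naturally $\tfrac{1}{m}u$) there is, by Dedekind completeness, a band projection $Q_n^{(m)}$ onto the band generated by $(f_n - \tfrac1m e)^+$, so that $(I - Q_n^{(m)})f_n \le \tfrac1m e$ while $Q_n^{(m)} u$ "detects" where $f_n$ is large. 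The elementary inequality $\tfrac1m Q_n^{(m)} e \le f_n$ (valid on the band) is the Riesz-space substitute for Markov's inequality and will drive the forward direction.

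For the easier direction (existence of $(P_n)$ of density zero with $(I-P_n)f_n \to 0$ implies Cesàro convergence), I would split $\tfrac1n\sum_{k=0}^{n-1} f_k = \tfrac1n\sum (I-P_k)f_k + \tfrac1n\sum P_k f_k$. The first sum goes to $0$ in order by Theorem \ref{convergent sequence-absolute cesaro sum} applied to the order-null sequence $(I-P_n)f_n$. The second is dominated by $\bigl(\tfrac1n\sum_{k=0}^{n-1} P_k\bigr)u$, and since $\tfrac1n\sum P_k \to 0$ in order (with all partial averages bounded by $I$, hence the products with $u$ are order bounded), this also tends to $0$ in order; the sum of two order-null sequences is order-null, and order boundedness below by $0$ finishes it.

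For the harder (forward) direction, assume $\tfrac1n\sum_{k=0}^{n-1} f_k \to 0$ in order. Fix $m \in \mathbb{N}$ and let $Q_n^{(m)}$ be the band projection described above, so $\tfrac1m Q_n^{(m)} e \le f_n$; summing, $\tfrac1m \cdot \tfrac1n\sum_{k=0}^{n-1} Q_k^{(m)} e \le \tfrac1n\sum_{k=0}^{n-1} f_k \to 0$, which shows $\bigl(Q_n^{(m)}\bigr)_n$ is a density zero sequence of band projections for each fixed $m$. The technical heart is a diagonalisation: I would choose a strictly increasing sequence $n_1 < n_2 < \cdots$ such that, for all $n \ge n_m$, $\tfrac1n\sum_{k=0}^{n-1} Q_k^{(m)} \le \tfrac1m e$ in a suitable sense (here care is needed because order convergence is not metrisable, so I would instead extract, via the Dedekind completeness, a single decreasing sequence witnessing each $\tfrac1n\sum Q_k^{(m)} \to 0$ and pass to a common control), and then define $P_n := Q_n^{(m)}$ for $n_m \le n < n_{m+1}$. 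One checks $(I - P_n) f_n \le \tfrac1m e$ on that block, so $(I-P_n)f_n \to 0$ in order (Archimedean property); and one checks $\tfrac1n\sum_{k=0}^{n-1} P_k \to 0$ in order by comparing, on the block $n_m \le n < n_{m+1}$, the average $\tfrac1n\sum P_k$ against $\tfrac1n\sum_{k=0}^{n-1} Q_k^{(m')}$ for the appropriate $m' \le m$ plus a vanishing head term, bounding everything above by something like $\tfrac{2}{m}e$.

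**Expected main obstacle.** The delicate point is the diagonalisation step: in a general Dedekind complete Riesz space order convergence has no countable base, so one cannot literally say "choose $n_m$ large enough that the tail is $< 1/m$." The fix I anticipate is to work with explicit witnessing nets/sequences: for each $m$, order convergence of $\tfrac1n\sum_{k=0}^{n-1} Q_k^{(m)} \to 0$ gives a decreasing sequence $g_n^{(m)} \downarrow 0$ with $\tfrac1n\sum_{k=0}^{n-1}Q_k^{(m)} \le g_n^{(m)}$; then use a single weak order unit and the Archimedean property to manufacture a decreasing-to-zero sequence that simultaneously controls the relevant pieces, and build the block structure against that. Assembling the two conclusions — that $(I-P_n)f_n$ is order null and that $(P_n)$ is density zero — from this block construction, while keeping all the comparisons order-bounded so that Theorem \ref{convergent sequence-absolute cesaro sum} and the Archimedean property apply, is where the real work lies. \qed
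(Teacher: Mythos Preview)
Your sufficiency direction (density zero sequence $\Rightarrow$ Ces\`{a}ro convergence) is correct and is exactly the paper's argument: split $f_k = P_kf_k + (I-P_k)f_k$, dominate the first piece by $\bigl(\frac{1}{n}\sum P_k\bigr)g$, and apply Theorem~\ref{convergent sequence-absolute cesaro sum} to the second.

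For the converse, your starting point also agrees with the paper: set $P_{m,n}$ (your $Q_n^{(m)}$) equal to the band projection onto the band generated by $(f_n-\tfrac{1}{m}e)^+$, and use the Markov-type estimate $\tfrac{1}{m}P_{m,n}e\le f_n$ to see that $(P_{m,n})_n$ is density zero for each fixed $m$. The divergence is in the diagonalisation, and here your plan has a genuine gap which you yourself flag but do not repair. A \emph{temporal} block construction --- choosing indices $n_1<n_2<\cdots$ and setting $P_n:=Q_n^{(m)}$ for $n_m\le n<n_{m+1}$ --- requires, in order to verify density zero, that for $n\ge n_m$ the averages $\frac{1}{n}\sum_{k<n}Q_k^{(m)}e$ are small \emph{uniformly} in some sense, e.g.\ below $\tfrac{1}{m}e$. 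Order convergence to $0$ does not give this: in $E=\R^{\N}$ with $e=(1,1,\dots)$ one easily builds examples (take $f_k(i)=1$ for $k<i$ and $0$ otherwise) where $\frac{1}{n}\sum_{k<n}Q_k^{(m)}e\to 0$ in order yet is never $\le ce$ for any $c<1$. Your proposed fix via witnessing sequences $g_n^{(m)}\downarrow 0$ does not help, because there is again no index $n_m$ at which $g_{n_m}^{(m)}$ drops below any prescribed multiple of $e$; the difficulty is intrinsic to choosing a single threshold $m$ globally at each time $n$.

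The paper resolves this with a \emph{spatial} rather than temporal diagonalisation. It introduces a second layer of band projections: with $u_{m,j}:=\sup_{k\ge j}\frac{1}{k}\sum_{i<k}P_{m,i}e$, let $R_{m,j}$ be the band projection onto the band generated by $(u_{m,j}-\tfrac{1}{m}e)^+$. One shows $R_{m,j}\downarrow_j 0$ for each $m$, and that the family $\bigl(R_{m+1,j}(I-R_{m,j})\bigr)_{m\in\N}$ partitions (the relevant part of) $E$. The required projection is then
\[
Q_j:=\sum_{m\in\N}P_{m,j}\,R_{m+1,j}(I-R_{m,j}),
\]
which applies threshold $m$ only on the band where $u_{m,j}\le\tfrac{1}{m}e$ but $u_{m+1,j}>\tfrac{1}{m+1}e$. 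This lets the choice of $m$ vary across the space, which is exactly what your block scheme cannot do. Verifying that $(I-Q_j)f_j\to 0$ and that $(Q_j)$ is density zero then proceeds by decomposing along this partition; the estimates are elementary but require keeping track of the interplay between the $P_{m,j}$ and $R_{m,j}$. This spatial diagonalisation is the missing idea in your proposal.
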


\begin{proof}
Suppose that  $(f_n)_{n \in \2N_0} \subset E_+$ and there exists $g\in E_+$ with $f_n\le g$, for all $n\in \N_0$.

If $\parent{P_n}_{n\in\2N_0}$ is a density zero sequence of band projections with $(I-P_n)f_n \to0$, in order, as $n \to \infty$, then $P_n f_n \leq P_n g$, for all $n \in \2N_0$, and
	\begin{subequations}
	\begin{align}
	0 \leq \frac{1}{n} \sum_{k=0}^{n-1} f_k
	&=
	\frac{1}{n} \sum_{k=0}^{n-1} P_k f_k
	+
	\frac{1}{n} \sum_{k=0}^{n-1}
	\parent{I-P_k} f_k
	\label{that*}
	\\
	& \leq
	\left(\frac{1}{n} \sum_{k=0}^{n-1} P_k\right) g
	+
	\frac{1}{n} \sum_{k=0}^{n-1}
	\parent{I-P_k} f_k
	\label{this*}
	.
	\end{align}
	\end{subequations}
Since $(P_n)_{n\in\2N_0}$ is of density zero,
$\displaystyle
\frac{1}{n} \sum_{k=0}^{n-1}P_k
\to 0,
$
in order, as $n\to \infty$, giving
$\displaystyle
\parent{
\frac{1}{n} \sum_{k=0}^{n-1} P_k
}
g
\to 0,
$
in order, as $n \to \infty$. Furthermore, by Theorem \ref{convergent sequence-absolute cesaro sum}, as $(I-P_n)f_n \to 0$, in order, as $n \to \infty$ and $(I-P_n)f_n\geq0$, we have
$\displaystyle
\frac{1}{n} \sum_{k=0}^{n-1} \parent{I-P_k} f_k
\to
0,
$
in order, as $n \to \infty$. Thus, by \eqref{that*}-\eqref{this*}, as $E$ is Archimedean,
$\displaystyle
\frac{1}{n} \sum_{k=0}^{n-1} f_k
\to 0,
$
in order, as $n \to \infty$.

Conversely, suppose that $\displaystyle \frac{1}{n} \sum_{k=0}^{n-1} f_k \to 0$, in order, as $n \to \infty$. 
Let $P_{m,i}$ be the band projection onto the band generated by ${\parent{f_i-\frac{1}{m}e}}^+$, let $\displaystyle u_{m,j}:=\sup_{k \geq j} \frac{1}{k} \sum_{i=0}^{k-1} P_{m,i}e$ and let $R_{m,j}$ be the band projection onto the band generated by ${\parent{u_{m,j}-\frac{1}{m}e}}^+$.
As $0\le P_{m,i}e\le e$, we have that $0 \leq u_{m,j} \leq e$.
Further, since $P_{m,i}$ is increasing in $m$ for fixed $i$, it follows that $u_{m,j}$ is increasing in
$m$ for fixed $j$ and hence $u_{m,j}-\frac{1}{m}e$ is increasing in $m$ for fixed $j$, giving that 
$R_{m,j}$ is increasing in $m$ for fixed $j$.

Since $\{k\in\Z\,|\,k\ge j+1\}\subset \{k\in\Z\,|\,k\ge j\},$ it follows that 
$$u_{m,j+1}=\sup_{k \geq j+1} \frac{1}{k} \sum_{i=0}^{k-1} P_{m,i}e\le \sup_{k \geq j} \frac{1}{k} \sum_{i=0}^{k-1} P_{m,i}e=u_{m,j},$$
giving that $u_{m,j}$ is decreasing in $j$.
Hence, $R_{m,j}$ is decreasing in $j$, for fixed $m$.
We now show that, for fixed $m$,
$R_{m,j} \downarrow 0$, in order, as $j \to \infty$. Since $f_i\ge P_{m,i}f_i\ge\frac{1}{m}P_{m,i}e$ (as $P_{m,i}$ is the projection onto the band generated by ${\parent{f_i-\frac{1}{m}e}}^+$), we have
\begin{eqnarray}
\sup_{k\ge j} \frac{1}{k}\sum_{i=0}^{k-1}f_i
\ge\frac{1}{m}\sup_{k\ge j} \frac{1}{k}\sum_{i=0}^{k-1}P_{m,i}e=\frac{1}{m}u_{m,j}.\label{KvN-1}
\end{eqnarray} 
However, $u_{m,j}\ge R_{m,j}u_{m,j}\ge \frac{1}{m}R_{m,j}e$ (since $R_{m,j}$ is the band projection onto the band generated by ${\parent{u_{m,j}-\frac{1}{m}e}}^+$), so, by \eqref{KvN-1}, we have
\begin{eqnarray}
0\le\frac{1}{m^2}R_{m,j}e
\le
\frac{1}{m}u_{m,j}
\le
\sup_{k\ge j} \frac{1}{k}\sum_{i=0}^{k-1}f_i\to 0,\label{KvN-2}
\end{eqnarray} 
in order, as $j\to\infty$. Thus, for fixed $m$, $R_{m,j}\downarrow 0$, in order, as $j\to\infty$.

Observe that $R_{1,j}=0$, since $R_{1,j}$ is the band projection onto the band generated by ${\parent{u_{1,j}-e}}^+=0$.
Let $\displaystyle J_j:=\sup_{m\in\N} R_{m,j}$, for $j=0,1,\dots$, then $J_j\downarrow J$, say, in order, as $j\to \infty$ and
$R_{m+1,j} \parent{I-R_{m,j}}$, $m \in \N$, is a partition of $J_j$ for each $j=0,1,\dots$.
That is,
\begin{equation}
\parent{R_{m+1,j} \parent{I-R_{m,j}}} \wedge \parent{R_{n+1,j} \parent{I-R_{n,j}}}=0, \textrm{ for $m\ne n$},
\label{converse*1}
\end{equation}
and 
$\displaystyle \sum_{m=1}^{M-1} R_{m+1,j} \parent{I-R_{m,j}}=R_{M,j}\uparrow_M J_j$, in order, as $M\to \infty$, that is,
\begin{equation}
\sum_{m\in \2{N}}
R_{m+1,j} \parent{I-R_{m,j}}
=
J_j.
\label{dagger}
\end{equation}
Let 
\begin{equation}
Q_j:=\bigvee_{m \in \N} P_{m,j}R_{m+1,j} \parent{I-R_{m,j}}=\sum_{m \in \N} P_{m,j}R_{m+1,j} \parent{I-R_{m,j}},
\label{x}
\end{equation}
by \eqref{converse*1}.
Further, as $0 \leq P_{m,j} \leq I$,
\begin{equation}
Q_j=
\sum_{m\in \2{N}} P_{m,j}R_{m+1,j} \parent{I-R_{m,j}}
\leq
\sum_{m\in \2N} R_{m+1,j} \parent{I-R_{m,j}}
=
J_j.
\label{happy face}
\end{equation}
Hence, subtracting the left-hand side of \eqref{happy face} from the right-hand side of \eqref{happy face}, we obtain
\begin{equation}
\parent{I-Q_j}=(I-J_j)+(J_j-Q_j) =(I-J_j)+ \sum_{m \in \N} \parent{I-P_{m,j}}R_{m+1,j} \parent{I-R_{m,j}}.
\label{starhat}
\end{equation}
For $m>j+1$, $j \in \2N_0$, we have that $\frac{1}{j+1}>\frac{1}{m}$.
Now, from the definition of $u_{m,j}$, we have
\[
u_{m,j}
=
\sup_{k \geq j}
\frac{1}{k}
\sum_{i=0}^{k-1} P_{m,i}e
\geq
\frac{1}{j+1}
\sum_{i=0}^{j} P_{m,i}e
\geq
\frac{1}{j+1} P_{m,j}e
,
\]
thus $P_{m,j}u_{m,j}\ge \frac{1}{j+1}P_{m,j}e$.
Hence,
$P_{m,j}u_{m,j}\ge \parent{\frac{1}{j+1}-\frac{1}{m}}P_{m,j}e+\frac{1}{m}P_{m,j}e,$
giving
\[
P_{m,j}\parent{u_{m,j}-\frac{1}{m}e}\ge \parent{\frac{1}{j+1}-\frac{1}{m}}P_{m,j}e,
\]
so
\[
P_{m,j} {\parent{u_{m,j}-\frac{1}{m}e}}^+
\geq
P_{m,j} \parent{u_{m,j}-\frac{1}{m}e}
\geq
\parent{\frac{1}{j+1}-\frac{1}{m}}P_{m,j}e
.
\]
Hence,
${\parent{u_{m,j}-\frac{1}{m}e}}^+
\geq
\parent{\frac{1}{j+1}-\frac{1}{m}}P_{m,j}e.$
Further, $\frac{1}{j+1}-\frac{1}{m}>0$, so $P_{m,j}e$ is in the band generated by ${\parent{u_{m,j}-\frac{1}{m}e}}^+$, giving $R_{m,j}\geq P_{m,j}.$
Applying the above to $f_j$, we have 
\begin{equation} \label{new face}
R_{m,j}f_j \geq P_{m,j}f_j.
\end{equation}
Taking the supremum over $m\in\N$ in \eqref{new face} gives $J_jf_j\ge f_j$, since $\displaystyle \sup_{m\in \2N}R_{m,j}=J_j$ and $\displaystyle \sup_{m\in \2N} P_{m,j}f_j=f_j$.
Thus, $(I-J_j)f_j=0$, so, for $k \leq j$, applying \eqref{starhat} to $f_j$ gives
\begin{subequations}
\begin{align}
0
&
\leq
\parent{I-Q_j} f_j
\leq
\sum_{m \in \2N}
R_{m+1,j}
\parent{I-R_{m,j}}
\parent{I-P_{m,j}}
f_j
\label{3-10-a}
\\
&
\leq
\sum_{m \in \2N}
\frac{1}{m} R_{m+1,j}
\parent{I-R_{m,j}}e
,
\label{3-10-b}
\end{align}
\end{subequations}
where we have used that $\displaystyle \parent{I-P_{m,j}} f_j \leq \frac{1}{m} e$.
Now, from  \eqref{3-10-a}-\eqref{3-10-b}, for $k \leq j$,
\begin{subequations}
\begin{align}
0
&
\leq
\parent{I-Q_j} f_j
\leq
\sum_{m=1}^{k-1}
\frac{1}{m}
R_{m+1,j}
\parent{I-R_{m,j}}
e
+
\sum_{m=k}^{\infty}
\frac{1}{m}
R_{m+1,j}
\parent{I-R_{m,j}}
e
\\
&
\leq
\sum_{m=1}^{k-1}
R_{m+1,j}
\parent{I-R_{m,j}}
e
+
\frac{1}{k}
\parent{I-R_{k,j}}
e
\leq
R_{k,j}e+\frac{1}{k}e
.
\end{align}
\end{subequations}

So, taking the $\limsup$ as $j\to\infty$, for fixed $k \in \2N$, we obtain
	\begin{equation}
	0 \leq \limsup_{j \to \infty} (I-Q_j)f_j
	\leq
	\frac{1}{k}e + \limsup_{j\to\infty}R_{k,j}e=\frac{1}{k}e.
	\end{equation}
Thus, as $E$ is Archimedean, $\limsup_{j\to \infty} \parent{I-Q_j}f_j=0$ and
$\displaystyle
\parent{I-Q_j}f_j \to 0,
$
in order, as $j\to \infty$.

It now remains to show that
$\displaystyle
\frac{1}{n} \sum_{j=0}^{n-1}Q_je \to 0
,
$ in order, as $n\to\infty$. 
For $j<n$ and $m\ge M+1$, we have $R_{M+1,n}\le R_{m,j}$, giving $R_{M+1,n}(I-R_{m,j})=0$.
Thus, for $M,n\in\N$, we have
\begin{eqnarray*}
R_{M+1,n}(I-R_{M,n})\frac{1}{n}\sum_{j=0}^{n-1}Q_je
&=&\frac{1}{n}\sum_{j=0}^{n-1}R_{M+1,n}(I-R_{M,n})\bigvee_{m \in \N} P_{m,j}R_{m+1,j} \parent{I-R_{m,j}}e\\
&=&\frac{1}{n}\sum_{j=0}^{n-1} \bigvee_{m \le M} R_{M+1,n}(I-R_{M,n}) P_{m,j}R_{m+1,j} \parent{I-R_{m,j}}e\\
&\le&\frac{1}{n}\sum_{j=0}^{n-1}R_{M+1,n}(I-R_{M,n}) P_{M,j}e\\
&=&R_{M+1,n}(I-R_{M,n}) \frac{1}{n}\sum_{j=0}^{n-1}P_{M,j}e\\
&\le&R_{M+1,n}(I-R_{M,n}) u_{M,n}\\
&\le&\frac{1}{M}R_{M+1,n}(I-R_{M,n})e.
\end{eqnarray*}
Summing the above over $M\ge K$ gives
\begin{subequations}
\begin{align}
(J_n-R_{K,n})\frac{1}{n}\sum_{j=0}^{n-1}Q_je
&\leq
\sum_{M \geq K} \frac{1}{M} R_{M+1,n} \parent{I-R_{M,n}}e
\label{circled A1}
\\
&\le 
\frac{1}{K}(J_n-R_{K,n})e
\leq
\frac{1}{K} e.
\label{circled A2}
\end{align}
\end{subequations}
We recall, from \eqref{happy face}, that $Q_j\le J_j$, so if $j\ge \floor{\sqrt{n}}$ then $Q_j\le J_j\le J_{\floor{\sqrt{n}}}$ and $Q_jJ_{\floor{\sqrt{n}}}=Q_j$, that is,
$(I-J_{\floor{\sqrt{n}}})Q_j=0$.
Hence,
\begin{equation}
(I-J_{\floor{\sqrt{n}}})\frac{1}{n}\sum_{j=0}^{n-1}Q_je=\frac{1}{n}\sum_{j=0}^{\floor{\sqrt{n}}-1}Q_je\le \frac{1}{\sqrt{n}}e.
\label{circled B}
\end{equation}
Combining \eqref{circled A1}-\eqref{circled A2} and \eqref{circled B}, we have, for each $K\in\N$,
\begin{subequations}
\begin{align}
\frac{1}{n}\sum_{j=0}^{n-1}Q_je
&
=
\parent{
\parent{
I-J_{\floor{\sqrt{n}}}
}
+
\parent{
J_{\floor{\sqrt{n}}}
-J_n
}
+
\parent{
J_n-R_{K,n}
}
+
R_{K,n}
}
\frac{1}{n}
\sum_{j=0}^{n-1} Q_j e
\\
&
\leq
\frac{1}{\sqrt{n}}e+(J_{\floor{\sqrt{n}}}-J_n)e+\frac{1}{K}e+R_{K,n}e
.
\label{KvN-bound}
\end{align}
\end{subequations}
Here, $J_n$ and $J_{\floor{\sqrt{n}}}$ both converge, in order, to $J$, so $(J_{\floor{\sqrt{n}}}-J_n)e$ converges, in order, to $0$ as $n\to\infty$, as does $R_{K,n}e$.
Thus, taking the limit supremum as $n\to\infty$ in (\ref{KvN-bound}) gives 
\begin{equation}
0\le \limsup_{n\to\infty}\frac{1}{n}\sum_{j=0}^{n-1}Q_je\le \frac{1}{K}e
\end{equation}
for each $K\in\N$. Hence,
$$\lim_{n \to \infty} \frac{1}{n}\sum_{j=0}^{n-1}Q_je =0,$$
in order, as $E$ is Archimedean.
\qed
\end{proof}

{\bf Note:} If $(f_n)_{n \in \2N_0}$ in Theorem \ref{koopman} is not assumed to be order bounded, but
\(\displaystyle
\frac{1}{n} \sum_{k=0}^{n-1} f_k\to 0,
\)
in order, as $n\to\infty$, then it still follows that there is a density zero sequence of band projections, $\parent{P_n}_{n \in \2N_0}$, such that
$\displaystyle
(I-P_n)f_n
\to 0,
$
in order, as $n \to \infty$, but one cannot conclude boundedness of $(P_n f_n)_{n \in \2N_0}$.
Further, the converse need not hold. In particular, if $(f_n)_{n \in \2N_0}$ is not bounded and there is a density zero sequence of band projections $\parent{P_n}_{n \in \2N_0}$ such that
$\displaystyle
(I-P_n)f_n
\to
0,
$
in order, as $n \to \infty$, then one cannot conclude that 
$\displaystyle
\frac{1}{n} \sum_{k=0}^{n-1} f_k$ is order convergent to $0$ as $n\to\infty$.

For example, working in the classical case of  $E=\R$, we have $g^p_j=\left\{\begin{array}{ll} n,&j=\floor{n^p}, n\in\N,\\ 0,&\mbox{otherwise}\end{array}\right.$ is unbounded and
$N=\{\floor{n^p}\,|\,n\in\N\}$ is a set of density zero for all $p>1$. Here $g^p_j=0\to 0$ for $j\in\N\backslash N$ as $j\to\infty$ but
$\displaystyle \frac{1}{n} \sum_{k=0}^{n-1} g^p_k$ is convergent to $0$ for $p>2$, convergent to a non-zero value for $p=2$ and divergent to $\infty$ for $1<p<2$.

\section{Application to Weak Mixing} \label{section:weak application}

Let $\parent{\Omega,\3A,\mu}$ be a probability space, that is, $\Omega$ is a set, $\3A$ is a $\sigma$-algebra of subsets of $\Omega$ and $\mu$ is a measure on $\3A$ with $\func{\mu}{\Omega}=1$. The mapping $\tau : \Omega \to \Omega$ is called a measure preserving transformation if $\func{\mu}{{\tau}^{-1} A}=\func{\mu}{A}$, for each $A \in \3A$, in which case $\parent{\Omega,\3A,\mu,\tau}$ is called a measure preserving system. Further details may be found in \cite{EFHN, krengel, ergodic book}.

The measure preserving system $\parent{\Omega,\3A,\mu,\tau}$ is said to be weakly mixing if
\begin{equation}
\frac{1}{n} \sum_{k=0}^{n-1}
\abs{
\func{\mu}{\func{{\tau}^{-1}}{A} \cap B}
-
\func{\mu}{A} \func{\mu}{B}
}
\to 0,
\end{equation}
in order, as $n\to\infty$, for each $A,B \in \3A$.
To give a Riesz space analogue of a measure preserving system and weak mixing, we begin by defining a conditional expectation operator on a Riesz space. For more details, see \cite{expectation paper}.

\begin{definition}
Let $E$ be a Riesz space with weak order unit. A positive order continuous projection $T \colon E \rightarrow E$, with range, $\func{\3R}{T}$, a Dedekind complete Riesz subspace of $E$, is called a conditional expectation operator if $Te$ is a weak order unit of $E$ for each weak order unit $e$ of $E$.
\end{definition}

The Riesz space analogue of a measure preserving system is introduced in the following definition.

\begin{definition} \label{system definition}
Let $E$ be a Dedekind complete Riesz space with weak order unit, say, $e$, and let $T$ be a conditional expectation operator on $E$ with $Te=e$. If $S$ is an order continuous Riesz homomorphism on $E$ with $Se=e$ and $TSPe=TPe$ for each band projection $P$ on $E$, then $\parent{E,T,S,e}$ is called a conditional expectation preserving system.
\end{definition}

By Freudenthal's Spectral Theorem (\cite[Theorem 33.2]{zaanen}), the condition $TSPe=TPe$ for each band projection $P$ on $E$ in the above definition is equivalent to $TSf=Tf$ for all $f \in E$.

For $f \in E$, various convergence results for 
$\displaystyle
{\parent{
\frac{1}{n} \sum_{k=0}^{n-1} S^k f
}}_{n \in {\2N}_0}
$
were considered in \cite[Theorems 3.7 and 3.9]{ergodic paper}, most notably,  generalisations of Birkhoff's ergodic theorems to Riesz spaces.

We recall that 
$E_e= \setbuilder{f \in E}{\abs{f} \leq ke \textrm{ for some $k \in {\2R}_+$}},$ 
 the subspace of $E$ of $e$ bounded elements of $E$, 
is an $f$-algebra, see \cite{azouzi, venter, zaanen2}. Further, if $T$ is a conditional expectation operator on $E$ with $Te=e$, then $T$ is also a conditional expectation operator on $E_e$, since, if $f \in E_e$, then $\abs{f} \leq ke$, giving $\abs{Tf} \leq T \abs{f} \leq Tke=ke$.
The $f$-algebra structure on $E_e$ gives $Pe \cdot Qe = PQe$ for all band projections $P$ and $Q$ on $E$ (here, $\cdot$ represents the $f$-algebra multiplication on $E_e$). The linear extension of this multiplication and use of order limits extends this multiplication to $E_e$.

We are now in a position to define conditional weak mixing on a Riesz space with a conditional expectation operator and weak order unit.

\begin{definition}[Weak Mixing] \label{weak mixing definition}
The conditional expectation preserving system $\parent{E,T,S,e}$ is said to be weakly mixing if, for all band projections $P$ and $Q$ on $E$,
\begin{equation} \label{weak mixing equation}
\frac{1}{n} \sum_{k=0}^{n-1} \abs{\func{T}{\parent{S^kP}Qe}-TPe \cdot TQe}\to 0,
\end{equation}
in order, as $n\to\infty$.
\end{definition}

We note that for $E=\func{L^1}{\Omega,\3A,\mu}$, the band projections on $E$ are of the form $P_Af= {\bigchi}_Af$, for $f \in E$ and $A \in \3A$. Definition \ref{weak mixing definition} now gives a conditional weak mixing condition on $E$, conditioned by $T= {\2E} \brackets{\cdot,\3B}$, for $\3B \subseteq \3A$ a sub-$\sigma$-algebra. If $\3B = \setbuilder{\Omega \setminus C}{C \in \3A, \func{\mu}{C}=0} \cup \setbuilder{C \in \3A}{\func{\mu}{C}=0}$, then conditional weak mixing coincides with the weak mixing on $\parent{\Omega, \3A,\mu}$.

In a Riesz space, $E$, for $u \in E_+$,  $p \in E_+$ is called a component of $u$ if $u \wedge \parent{e-p}=0$, see \cite[Chapter 32, pg 213]{zaanen}. Furthermore, if $E$ has the principal projection property, then $p$ is a component of $u$ if and only if $p=P_fu$, for some principal projection $P_f$, \cite[Theorem 32.7]{zaanen}. If $E$ has a weak order unit, say, $e$, then any $s \in E$ for which there exist pairwise disjoint components $p_1,...,p_n$ of $u$ and real numbers ${\alpha}_1,...,{\alpha}_n$ such that $s= \sum_{k=1}^n {\alpha}_k p_k$ is called an $e$-step function (it is permitted for one or several of the real numbers of components of $e$ to be zero). Notice that if $E$ has the principal projection property, then there exist principal band projections $P_1,...,P_n$ such that $s=\sum_{k=0}^n {\alpha}_k P_ke$, where the band projections are pairwise disjoint. Consequently, if $E$ is a Dedekind complete Riesz space with weak order unit, say, $e$, then $B_e=E \supseteq E_e$ and any $e$-step function $s \in E$ can be represented by $s=\sum_{k=1}^n {\alpha}_k P_k e$, as discussed, where $B_e$ is the principal band generated by $e$.

We recall, \cite[Chapter 10, pg 49]{zaanen}, that if $E$ is a Riesz space and ${\parent{x_n}}_{n \in \2N} \subset E$ converges to $x \in E$, we say that $x_n$ converges to $x$ $u$-uniformly if for given $0<u \in E$ and each $0< \epsilon \in \2R$, there is some $N_{\epsilon} \in \2N$ such that $\abs{x_n-x}< \epsilon u$ whenever $n \geq N_{\epsilon}$.

\begin{theorem} \label{weak mixing big theorem}
Given the conditional expectation preserving system $\parent{E,T,S,e}$, then the following statements are equivalent.
\begin{enumerate}
\item
\label{weak mixing big theorem 1}
$\parent{E,T,S,e}$ is weakly mixing.
\item For  all $f,g \in E_e$, as $n\to \infty$, we have, in order, that
\label{weak mixing big theorem 2}
\[
\frac{1}{n} \sum_{k=0}^{n-1} \abs{\func{T}{\parent{S^kf} \cdot g}-Tf \cdot Tg} \rightarrow 0.
\]
\item
\label{weak mixing big theorem 3}
For each pair of band projections $P$ and $Q$ on $E$, there is a sequence of density zero band projections, $\displaystyle {\parent{R_n}}_{n \in {\N}_0},$ in $E$ such that
\[
\parent{I-R_n}
\abs{\func{T}{\parent{S^nP}Qe}-\func{T}{Pe} \cdot \func{T}{Qe}}
\to 0
,
\]
in order, as $n \rightarrow \infty$.
\end{enumerate}
\end{theorem}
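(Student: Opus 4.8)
The plan is to prove $(\ref{weak mixing big theorem 2})\Rightarrow(\ref{weak mixing big theorem 1})$ trivially, $(\ref{weak mixing big theorem 1})\Leftrightarrow(\ref{weak mixing big theorem 3})$ as a direct application of Theorem \ref{koopman}, and $(\ref{weak mixing big theorem 1})\Rightarrow(\ref{weak mixing big theorem 2})$ by a two-step approximation, which is where the work lies. Throughout I use that each $S^k$ is a Riesz homomorphism with $S^ke=e$, so $S^k$ carries components of $e$ to components of $e$; in particular $S^k(Pe)=P'e$ for some band projection $P'$, and the element written $\parent{S^kP}Qe$ in \eqref{weak mixing equation} equals $P'Qe=S^k(Pe)\cdot Qe$, the $f$-algebra product in $E_e$. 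With this identification, taking $f=Pe$ and $g=Qe$ (both in $E_e$) in (\ref{weak mixing big theorem 2}) reproduces \eqref{weak mixing equation}, so $(\ref{weak mixing big theorem 2})\Rightarrow(\ref{weak mixing big theorem 1})$.

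For $(\ref{weak mixing big theorem 1})\Leftrightarrow(\ref{weak mixing big theorem 3})$, fix band projections $P,Q$ and set $f_n:=\abs{\func{T}{\parent{S^nP}Qe}-\func{T}{Pe}\cdot\func{T}{Qe}}$. This is a sequence in $E_+$, order bounded by $e$: since $\parent{S^nP}Q$ is a band projection, $0\le\func{T}{\parent{S^nP}Qe}\le Te=e$, and since $0\le TPe,TQe\le e$, we get $0\le TPe\cdot TQe\le e$ in the $f$-algebra $E_e$, whence $0\le f_n\le e$. Applying Theorem \ref{koopman} to $(f_n)_{n\in\N_0}$ then shows that $\frac1n\sum_{k=0}^{n-1}f_k\to 0$ in order — i.e. weak mixing of the pair $P,Q$ — if and only if there is a density zero sequence of band projections $(R_n)_{n\in\N_0}$ with $(I-R_n)f_n\to 0$ in order, which is exactly statement (\ref{weak mixing big theorem 3}).

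The substantive implication is $(\ref{weak mixing big theorem 1})\Rightarrow(\ref{weak mixing big theorem 2})$. \emph{Step 1 (reduction to $e$-step functions).} If the conclusion of (\ref{weak mixing big theorem 2}) holds for $(f_1,g)$ and for $(f_2,g)$, then, applying the triangle inequality term-by-term inside the Cesàro mean, the mean for $(\alpha f_1+\beta f_2,g)$ is dominated by $\abs{\alpha}$ times the mean for $(f_1,g)$ plus $\abs{\beta}$ times the mean for $(f_2,g)$, which is order null; as $E$ is Archimedean, the conclusion holds for $(\alpha f_1+\beta f_2,g)$, and symmetrically in the second slot. Since the base case $f=Pe$, $g=Qe$ is precisely (\ref{weak mixing big theorem 1}), the conclusion of (\ref{weak mixing big theorem 2}) holds for all $e$-step functions $f=\sum_i\alpha_iP_ie$ and $g=\sum_j\beta_jQ_je$. \emph{Step 2 ($e$-uniform approximation).} Given $f,g\in E_e$ and $\epsilon>0$, Freudenthal's Spectral Theorem supplies $e$-step functions $s,t$ with $\abs{f-s}\le\epsilon e$ and $\abs{g-t}\le\epsilon e$. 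Writing $f=s+(f-s)$, $g=t+(g-t)$ and expanding $\func{T}{(S^kf)\cdot g}-Tf\cdot Tg$ against $\func{T}{(S^ks)\cdot t}-Ts\cdot Tt$, every resulting error term carries a factor $S^k(f-s)$, $(g-t)$, $T(f-s)$ or $T(g-t)$; estimating with $\abs{S^k(f-s)}=S^k\abs{f-s}\le\epsilon e$, $\abs{T(f-s)}\le T\abs{f-s}\le\epsilon e$ (and likewise for $g-t$), together with $S^ke=e$, $Te=e$, the positivity of $S$ and $T$, and the $e$-bounds of $f,g$, yields $\abs{\func{T}{(S^kf)\cdot g}-Tf\cdot Tg}\le\abs{\func{T}{(S^ks)\cdot t}-Ts\cdot Tt}+C\epsilon e$ with $C$ independent of $k$. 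Averaging over $k$ and taking $\limsup_{n\to\infty}$, Step 1 forces $\limsup_n\frac1n\sum_{k=0}^{n-1}\abs{\func{T}{(S^kf)\cdot g}-Tf\cdot Tg}\le C\epsilon e$; letting $\epsilon\downarrow 0$ and invoking the Archimedean property gives the order limit $0$, i.e. (\ref{weak mixing big theorem 2}).

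The main obstacle is the uniformity in $k$ of the error estimate in Step 2: one must dominate $\frac1n\sum_{k=0}^{n-1}\abs{\func{T}{(S^kf)\cdot g}-\func{T}{(S^ks)\cdot t}}$ by a single multiple of $\epsilon e$ independent of $n$, which is exactly where the $e$-boundedness of the elements, the identities $S^ke=e$ and $Te=e$, and the positivity (and order continuity) of $T$ on the $f$-algebra $E_e$ are essential; with those in hand the remaining estimates are routine. A secondary point needing care is the identification $\parent{S^kP}Qe=S^k(Pe)\cdot Qe$ used in the first two implications, which rests on Riesz homomorphisms mapping components of $e$ to components of $e$.
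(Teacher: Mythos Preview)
Your proposal is correct and follows essentially the same route as the paper: $(\ref{weak mixing big theorem 2})\Rightarrow(\ref{weak mixing big theorem 1})$ by specialising to $f=Pe,\ g=Qe$; $(\ref{weak mixing big theorem 1})\Leftrightarrow(\ref{weak mixing big theorem 3})$ by applying Theorem~\ref{koopman} to $f_n=\abs{\func{T}{\parent{S^nP}Qe}-TPe\cdot TQe}$; and $(\ref{weak mixing big theorem 1})\Rightarrow(\ref{weak mixing big theorem 2})$ by linearity to $e$-step functions followed by a Freudenthal approximation with a $k$-uniform error bound of the form $C\epsilon e$. The only cosmetic differences are that the paper works with approximating \emph{sequences} $(s_i),(t_j)$ rather than a single pair $s,t$, and uses $TS^k=T$ (rather than $S^ke=e$) to bound the $S^k(f-s)$ term; your explicit verification of order boundedness $f_n\le e$ and of the identification $\parent{S^kP}Qe=S^k(Pe)\cdot Qe$ are details the paper leaves implicit.
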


\begin{proof}
{\bf ({\ref{weak mixing big theorem 1}})$\Rightarrow$({\ref{weak mixing big theorem 2}}):} 
Suppose that $\parent{E,T,S,e}$ is weakly mixing, let $\alpha, \beta \in \2R$ and let $P$ and $Q$ be band projections on $E$, then
\begin{subequations}
\begin{align*}
&
\frac{1}{n} \sum_{k=0}^{n-1}
\abs{
\func{T}{\parent{
\func{S^k}{\alpha P e}
} \cdot
\parent{\beta Q e}
}
-
\func{T}{
\alpha P e
}
\cdot
\func{T}{
\beta Q e
}
}
\\
& =
\abs{\alpha \beta}
\frac{1}{n}
\sum_{k=0}^{n-1}
\abs{
\func{T}{\parent{
\func{S^k}{P}}Qe
}
-
\func{T}{Pe} \cdot \func{T}{Qe}
}
\to 0,
\end{align*}
\end{subequations}
in order, as $n \to \infty$, by \eqref{weak mixing equation}.

Let $s,t \in E$ be $e$-step functions with $\displaystyle s= \sum_{i=1}^m {\alpha}_i P_i e$ and $\displaystyle t=\sum_{j=1}^r {\beta}_j Q_j e$, where $P_i$ and $Q_j$ are band projections on $E$ and $\alpha_i$ and $\beta_j$ are real numbers, $i=1,\dots,m$, $j=1,\dots,r$, then
\begin{subequations}
\begin{align}
&\frac{1}{n}
\sum_{k=0}^{n-1}
\abs{
\func{T}{\parent{
\func{S^k}{s}} \cdot t
}
-
\func{T}{s} \cdot \func{T}{t}
}
\\
&
\leq
\sum_{i=1}^m \sum_{j=1}^r \abs{{\alpha}_i {\beta}_j}
\frac{1}{n}
\sum_{k=0}^{n-1}
\abs{
\func{T}{\parent{\func{S^k}{P_i}}Q_je}
-
\func{T}{P_ie} \cdot \func{T}{Q_je}
}
\to 0,
\label{use this}
\end{align}
\end{subequations}
in order, as $n \to \infty$.

By Freudenthal's Spectral Theorem, \cite[Theorem 33.2]{zaanen}, $f,g \in E_e$ can be expressed as the $e$-uniform order limits of sequences, say ${\parent{s_i}}_{i \in \2N},{\parent{t_j}}_{j \in \2N}$, of $e$-step functions in $E_e$. 
There is now $K>0$ so that $\abs{s_i},\abs{t_j}, \abs{f},\abs{g} \leq Ke$, for each $i,j \in \2N$. This implies $T\abs{s_i}, T{\abs{t_j}}, T{\abs{f}}, T{\abs{g}} \leq Ke$, for each $i,j \in \2N$.
For each $\epsilon>0$ there is $N_\epsilon\in\2N$ so that $\abs{s_i-f} \leq \epsilon e$ and $\abs{t_j-g} \leq \epsilon e$ for each $i,j \geq N_{\epsilon} \in \2N$, then $T{\abs{s_i-f}} \leq \epsilon e$ and $T{\abs{t_j-g}} \leq \epsilon e$ for each $i,j \geq N_{\epsilon}$. Let $b_k:=\abs{\func{T}{{\func{S^k}{f}} \cdot g}-T{f} \cdot T{g}}$ and $b_{k,i,j}:=\abs{\func{T}{{\func{S^k}{s_i}} \cdot t_j}-T{s_i} \cdot T{t_j}}$, then
\begin{subequations}
\begin{align*}
\abs{b_{k,i,j}-b_k}
& = \left|\abs{\func{T}{{\func{S^k}{s_i}} \cdot t_j}-Ts_i \cdot Tt_j}-  \abs{\func{T}{{\func{S^k}{f}} \cdot g}-Tf \cdot Tg}\right|\\
& \le \left|{\func{T}{{\func{S^k}{s_i}} \cdot t_j}-Ts_i \cdot Tt_j}-  {\func{T}{{\func{S^k}{f}} \cdot g}+Tf \cdot Tg}\right|\\
&
\leq\abs{\func{T}{{\func{S^k}{f}}\cdot g
-{\func{S^k}{s_i}}\cdot t_j}}+\abs{Tf \cdot Tg -Ts_i \cdot Tt_j}\\
&
\leq
T{\abs{\left(\func{S^k}{f-s_i}\right)\cdot g}}
+T{\abs{\func{S^k}{s_i}\cdot (g-t_j)}}
+T{\abs{f-s_i}} \cdot T{\abs{g}}
+
T{\abs{s_i}} \cdot T{\abs{g-t_j}}
\\
&
\leq
Ke \cdot T{\abs{\func{S^k}{f-s_i}}}
+
Ke \cdot T{\abs{g-t_j}}
+
T{\abs{f-s_i}} \cdot Ke + Ke \cdot T{\abs{g-t_j}}
\\
&
\leq
4K \epsilon e.
\end{align*}
\end{subequations}
Hence, $b_k \leq b_{k,i,j}+4K \epsilon e$, so
\[
\frac{1}{n} \sum_{k=0}^{n-1} b_k
\leq
\frac{1}{n} \sum_{k=0}^{n-1} b_{k,i,j} + 4K \epsilon e,
\]
for all $n,i,j \in \2N$ with $i,j \geq N_{\epsilon}$. In particular,
\[
0 \leq \frac{1}{n} \sum_{k=0}^{n-1} b_k
\leq \frac{1}{n} \sum_{k=0}^{n-1} b_{k,N_{\epsilon},N_{\epsilon}}+ 4K \epsilon e
.
\]
By \eqref{use this}, $\displaystyle \frac{1}{n} \sum_{k=0}^{n-1} b_{k,N_{\epsilon},N_{\epsilon}} \to 0$, in order, as $n \to \infty$, so
\[
0
\leq
\limsup_{n \to \infty} \frac{1}{n} \sum_{k=0}^{n-1} b_k
\leq
4K \epsilon e,
\]
for all $\epsilon > 0$, implying that $\displaystyle \frac{1}{n} \sum_{k=0}^{n-1} b_k \to 0$, in order.

{\bf ({\ref{weak mixing big theorem 2}})
$\Rightarrow$
({\ref{weak mixing big theorem 1}}):} 
Choosing $f=Pe$ and $g=Qe$,  the result follows directly.

{\bf ({\ref{weak mixing big theorem 1}})$\Leftrightarrow$({\ref{weak mixing big theorem 3}}): }
Choosing $f_n=\abs{\func{T}{\parent{S^nP}Qe}-TPe \cdot TQe}$,  the result follows from Theorem~\ref{koopman}.
\qed
\end{proof}
\section{Application to Measurable Processes}
\label{section:application measurable}

If we consider the Riesz space $E$  of equivalence classes of almost everywhere identical functions in $L^1(\Omega,{\mathcal{A}},\mu)$, where $\mu$ is a finite measure (the case of $\mu$ $\sigma$-finite is an easy extension of this case), then $E$ is a Dedekind complete Riesz spaces under a.e. pointwise ordering and $\bf{1}$, the equivalence class of the constant function with value $1$, is a weak order for $E$. Here the band projections, $P$, on $E$ are multiplication by the characteristic function of measurable sets, i.e., $P$ is of the form $Pf=\bigchi_A f, f\in E,$ for $A\in{\mathcal{A}}$.
If $(f_n)_{n \in {\2N}_0}$ is a non-negative order bounded sequence in $E$, i.e., $f_n\ge 0$ a.e. and there exists $g\in  L^1(\Omega,{\mathcal{A}},\mu)$ so that $f_n\le g$ a.e. for all $n$, then, by Theorem \ref{koopman}, 
\begin{equation}\label{cesaro-f}
 \frac{1}{n}\sum_{j=0}^{n-1}f_j\to 0, \quad\mbox{in order, as}\quad n\to\infty,
\end{equation}
if and only if there is a density zero sequence of band projections $(P_n)_{n \in {\2N}_0}$ such that $(I-P_n)f_n\to 0$, in order, as $n\to\infty$, i.e., there is a sequence of measurable sets $(A_n)_{n \in {\2N}_0}$ with 
\begin{equation}\label{density-0}
 \frac{1}{n}\sum_{j=0}^{n-1}\bigchi_{A_j}\to 0\quad\mbox{in order as}\quad n\to\infty,
\end{equation}
with $\bigchi_{\Omega\backslash A_n}f_n\to 0$ in order as $n\to\infty$.

Here, order convergence is the same as a.e. pointwise convergence, further, as $\displaystyle{\frac{1}{n}\sum_{j=0}^{n-1}f_j\le g}$,  $\displaystyle{\frac{1}{n}\sum_{j=0}^{n-1}\bigchi_{A_j}\le \bf{1}}$ and $\bigchi_{\Omega\backslash A_n}f_n\le g$, Lebesgue's Dominated Convergence Theorem is applicable. Thus we have the following.

\begin{corollary}
If $(f_n)_{n \in {\2N}_0}$ is a non-negative sequence in $L^1(\Omega,{\mathcal{A}},\mu)$, where $\mu$ is a finite measure, and there exists  $g\in L^1(\Omega,{\mathcal{A}},\mu)$ so that $f_n\le g$ a.e. for all $n \in {\2N}_0$, then 
$\displaystyle{\frac{1}{n}\sum_{j=0}^{n-1}f_j\to 0}$ as $n\to\infty$,
if and only if there is a sequence of measurable sets $(A_n)_{n \in {\2N}_0}$ with
$\displaystyle{
 \frac{1}{n}\sum_{j=0}^{n-1}\bigchi_{A_j}\to 0}$
and $\bigchi_{\Omega\backslash A_n}f_n\to 0$ as $n\to\infty$. Here the limits can be taken as either a.e. pointwise or in norm.
\end{corollary}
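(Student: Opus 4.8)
The plan is to obtain the corollary directly from Theorem~\ref{koopman}, using the concrete description of $L^1$ as a Riesz space recorded above, and then to reconcile the different modes of convergence. First I would fix notation: let $E$ be the space of $\mu$-a.e.\ equivalence classes of functions in $L^1(\Omega,\mathcal A,\mu)$, ordered a.e.\ pointwise; then $E$ is Dedekind complete with weak order unit $\mathbf 1$, and every band projection on $E$ has the form $P_A f=\bigchi_A f$ for some $A\in\mathcal A$, with $(I-P_A)f=\bigchi_{\Omega\setminus A}f$. Hence a density zero sequence of band projections on $E$ is exactly a sequence $(A_n)_{n\in\N_0}$ of measurable sets with $\frac{1}{n}\sum_{j=0}^{n-1}\bigchi_{A_j}\to 0$ in order, and the condition $(I-P_n)f_n\to 0$ in order becomes $\bigchi_{\Omega\setminus A_n}f_n\to 0$ in order. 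Since $(f_n)\subset E_+$ is order bounded above by $g$, Theorem~\ref{koopman} gives that $\frac{1}{n}\sum_{j=0}^{n-1}f_j\to 0$ in order if and only if such a sequence $(A_n)$ exists.

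Next I would check that, for the sequences involved, ``order'' may be replaced by ``a.e.\ pointwise''. For a sequence in $E$ dominated by a fixed element of $E$, order convergence to $0$ and a.e.\ pointwise convergence to $0$ coincide: order convergence always implies a.e.\ convergence (order limits of decreasing sequences in $L^1$ are their pointwise a.e.\ limits), and conversely, if $0\le h_k\le g$ a.e.\ with $h_k\to 0$ a.e., then $\sup_{k\ge n}h_k$ is computed pointwise a.e., belongs to $E$ by Dedekind completeness, and decreases pointwise a.e.\ (hence in order) to $0$, so $h_n\to 0$ in order. The three sequences in the statement, $\frac{1}{n}\sum_{j=0}^{n-1}f_j$, $\frac{1}{n}\sum_{j=0}^{n-1}\bigchi_{A_j}$ and $\bigchi_{\Omega\setminus A_n}f_n$, are dominated by $g$, by $\mathbf 1$, and by $g$ respectively, so for each of them order convergence to $0$ is the same as a.e.\ convergence to $0$. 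This yields the corollary with every limit read a.e.\ pointwise.

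For the norm formulation I would prove the equivalence directly. Suppose first that $\bigl\|\frac{1}{n}\sum_{j=0}^{n-1}f_j\bigr\|_1\to 0$; since $f_j\ge 0$, this is the statement $\frac{1}{n}\sum_{j=0}^{n-1}\|f_j\|_1\to 0$, so the classical Koopman--von Neumann Lemma \cite[Lemma 6.2]{ergodic book}, applied to the bounded non-negative real sequence $(\|f_n\|_1)$, supplies a density zero set $N\subseteq\N_0$ with $\|f_n\|_1\to 0$ for $n\notin N$; taking $A_n=\Omega$ when $n\in N$ and $A_n=\emptyset$ otherwise gives measurable sets with $\frac{1}{n}\sum_{j=0}^{n-1}\bigchi_{A_j}\to 0$ and $\bigchi_{\Omega\setminus A_n}f_n\to 0$ in norm. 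Conversely, given measurable sets $(A_n)$ with $\frac{1}{n}\sum_{j=0}^{n-1}\mu(A_j)\to 0$ and $\|\bigchi_{\Omega\setminus A_n}f_n\|_1\to 0$, I would estimate $\frac{1}{n}\sum_{j=0}^{n-1}\|f_j\|_1\le\frac{1}{n}\sum_{j=0}^{n-1}\int_{A_j}g+\frac{1}{n}\sum_{j=0}^{n-1}\|\bigchi_{\Omega\setminus A_j}f_j\|_1$; the second term is a Ces\`aro mean of a null sequence, and, using the uniform integrability of the single function $g$ (choose $M$ with $\int(g-M)^+<\epsilon$, so that $\int_{A_j}g\le M\mu(A_j)+\epsilon$), the first term has $\limsup$ at most $\epsilon$ for every $\epsilon>0$; hence $\bigl\|\frac{1}{n}\sum_{j=0}^{n-1}f_j\bigr\|_1\to 0$. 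One also passes from the a.e.\ readings to the norm readings via Lebesgue's Dominated Convergence Theorem, since $\frac{1}{n}\sum_{j=0}^{n-1}f_j\le g$, $\frac{1}{n}\sum_{j=0}^{n-1}\bigchi_{A_j}\le\mathbf 1$ and $\bigchi_{\Omega\setminus A_n}f_n\le g$ furnish the required integrable dominations.

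I do not anticipate a genuine obstacle, since the corollary is essentially a translation of Theorem~\ref{koopman}. The two points requiring a little care are the identification of order convergence with a.e.\ pointwise convergence for dominated sequences, where Dedekind completeness of $E$ is used to realise the relevant suprema pointwise, and the ``only if'' direction of the norm version, which has to be obtained by integrating and exploiting the uniform integrability of $g$ rather than from dominated convergence, because $L^1$-convergence alone does not force a.e.\ convergence.
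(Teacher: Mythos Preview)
Your proposal is correct and, for the a.e.\ pointwise reading, follows exactly the paper's approach: specialise Theorem~\ref{koopman} to $E=L^1(\Omega,\mathcal A,\mu)$, identify band projections with multiplication by $\bigchi_A$, and use that order convergence coincides with a.e.\ convergence for dominated sequences. Where you diverge from the paper is in the norm reading. The paper simply observes that the three sequences are dominated by $g$, $\mathbf 1$, $g$ respectively and invokes Lebesgue's Dominated Convergence Theorem. You note, rightly, that DCT only yields a.e.\ $\Rightarrow$ norm, whereas the norm equivalence as stated also requires handling the case where one starts from $\bigl\|\frac{1}{n}\sum f_j\bigr\|_1\to 0$, which does not by itself force a.e.\ convergence. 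Your remedy---applying the scalar Koopman--von Neumann lemma to the bounded real sequence $(\|f_n\|_1)$ for the forward direction, and using the uniform integrability of the single function $g$ for the backward direction---is a genuinely different and more complete argument for the norm formulation. What it buys is a self-contained proof of the norm equivalence that does not rely on first establishing a.e.\ convergence; what the paper's shortcut buys is brevity, at the cost of leaving the passage from norm hypotheses back to the a.e.\ framework unaddressed.
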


Proceeding as in \cite[Section 5]{KRW}, the weak mixing of Section \ref{section:weak application} can be carried over to give conditional weak mixing in measure spaces and a characterization thereof.

\end{document}